\documentclass[11pt,reqno]{amsproc}
\usepackage[margin=1in]{geometry}
\usepackage{amsmath, amsthm, amssymb}
\usepackage{times, esint,stackrel}
\usepackage{enumitem}
\usepackage{color}
\usepackage[colorlinks=true]{hyperref}
\usepackage[color=yellow]{todonotes}
\usepackage{mathtools}
\usepackage{etoolbox}
\usepackage{mathrsfs}
\usepackage{framed}

\makeatletter
\patchcmd\@thm
  {\let\thm@indent\indent}{\let\thm@indent\noindent}%
  {}{}
\makeatother

\usepackage{etoolbox}
\expandafter\patchcmd\csname\string\proof\endcsname
  {\normalparindent}{0pt }{}{}


\newcommand{\eee}{equation}

\newcommand{\pa}{\partial}

\newcommand{\be}{\begin{\eee}}
\newcommand{\ee}{\end{\eee}}

\newcommand{\eea}{\end{\eea}}

\newtheorem{prop}{Proposition}
\newtheorem{lemma}{Lemma}

\theoremstyle{definition}
\newtheorem{rem}{Remark}

\renewcommand{\div}{{\mbox{div}\,}}

\newcommand{\ve}{{\varepsilon}}

\newcommand{\rmd}{{\rm d}}

\def\div{\text{div}}

\newcommand{\lp}{\mathsf{m}}

\DeclareMathOperator{\hess}{{\rm Hess}}

\newenvironment{manualtheorem}[1]{%
  \manualtheoreminner
}{\endmanualtheoreminner}

\title{ \vspace{-15mm} Islands in stable fluid equilibria}

\author{Theodore D. Drivas}
\address{Department of Mathematics, Stony Brook University,
Stony Brook, NY, 11794}
\email{tdrivas@math.stonybrook.edu}
\author{Daniel Ginsberg}
\address{Department of Mathematics, Princeton University, Princeton, NJ 08544}
\email{ dg42@princeton.edu}

\date{today}
\begin{document}

\vspace{-4mm}

\begin{center}
\textit{Dedicated to Prof. Peter Constantin on the occasion of his 70th birthday.}
\end{center}
\vspace{-0mm}

\begin{abstract}
We prove that stable fluid equilibria with trivial homology on curved, reflection-symmetric periodic channels must posses ``islands", or cat's eye vortices. In this way, arbitrarily small disturbances of a flat boundary cause a change of streamline topology of stable steady states.
\end{abstract}
\vspace*{-26mm}
\maketitle

\vspace{2mm}

Given a smooth periodic function $h:\mathbb{T}\to \mathbb{R}$ such that $|h|< 1$, consider an annular domain $D_h$ with reflection symmetry across the centerline,
\be\label{dom}
D_h = \{ (x,y) \ : \ x\in \mathbb{T}, \ -1- h(x) \leq y \leq 1+h(x)\}.
\ee
We are interested in the structure of ideal fluid equilibria, i.e. steady solutions of the Euler equation, on $D_h$
\begin{align}\label{ee1}
u \cdot \nabla u &= -\nabla p \qquad \text{in } \quad D_h,\\ \label{ee2}
 \nabla \cdot u &= 0  \qquad \quad \ \ \text{in } \quad D_h,\\ \label{ee3}
 u\cdot \hat{n} &= 0  \qquad \quad \ \ \text{on } \quad \partial D_h.
\end{align}
Since $u$ is divergence-free and tangent to the boundaries, there exists a streamfunction $\psi:D_h\to \mathbb{R}$  so that  $u=\nabla^\perp \psi = (-\partial_y \psi, \partial_x \psi)$.
Of particular interest are Arnold stable solutions \cite{AK}, which satisfy
\begin{align}\label{arnoldstab}
\omega= F(\psi) \quad \text{for} \quad -\lambda_1 < F' < 0 \ \  \text{or} \ \ F'>0
\end{align}
for Lipschitz $F:\mathbb{R}\to \mathbb{R}$, where $\lambda_1= \lambda_1(D_h)$ is the smallest eigenvalue of the Dirichlet Laplacian $-\Delta$ on $D_h$.
It turns out that all Arnold stable equilibria $u=(u_1,u_2)$ on $D_h$ with trivial homology (trivial projection onto harmonic vector fields) must conform to the symmetry of the domain (see Lemma \ref{symlem}), i.e.
\begin{align} \label{sym}
 u_1(x,y) = - u_1(x,-y), \quad u_2(x,y) = u_2(x,-y).
\end{align}
    \begin{figure}[h!]\label{bstruct}
      \includegraphics[width=.42 \linewidth]{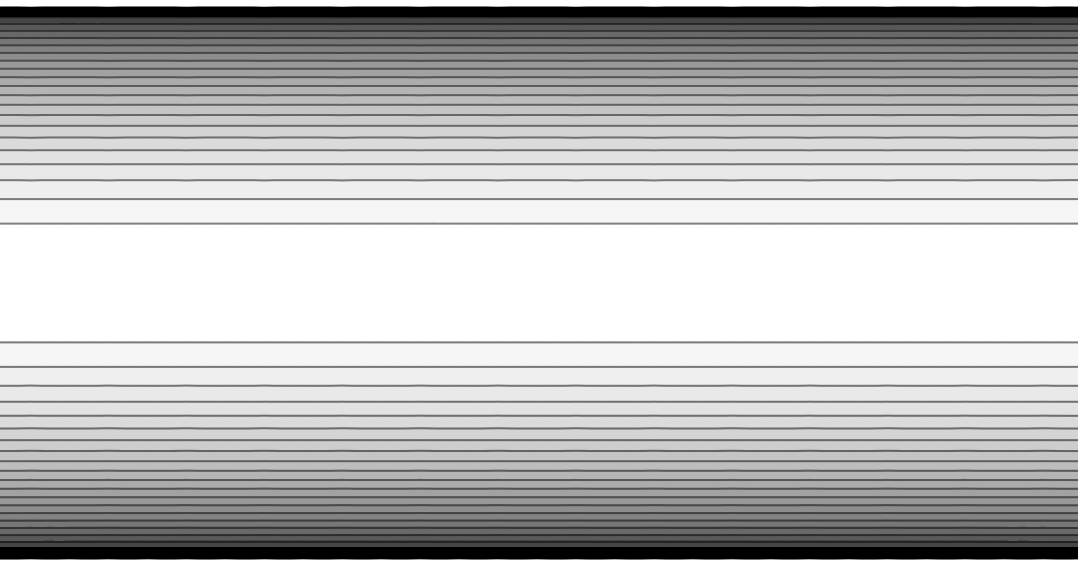}\qquad
            \includegraphics[width=.42 \linewidth]{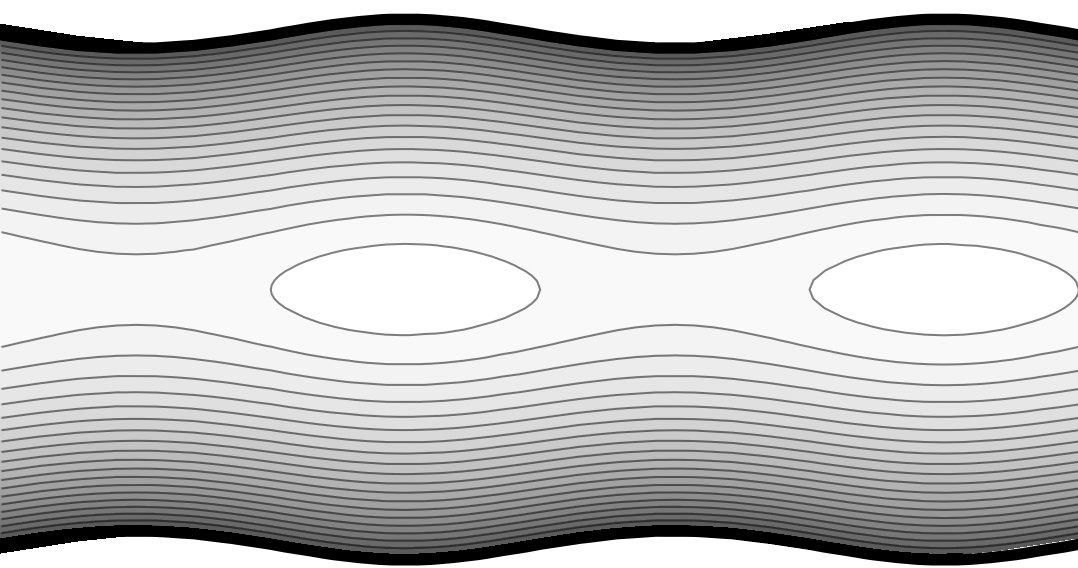}
            \caption{Stable equilibria (constant vorticity) on straight and curved channel.}
    \end{figure}

When $h'=0$,  all stable equilibria are shears, e.g. their streamlines are all straight (topologically, non-contractable loops), see \cite[Proposition 1.1]{CDG21}.
 A well-known example
is Couette flow $u(x,y) = (y,0)$. Note, since all harmonic vector fields are $\propto e_1$,  that Couette has  trivial homology as $\int_{D_0} u_1\rmd x=0$.
On the other hand, we prove here that if $h'\neq 0$, \textit{all} stable solutions with trivial homology   must also possess  streamlines that are contractible loops, e.g. they must have ``islands" or cat's eye vortices.

\begin{manualtheorem}{1}\label{thm}
Let $u\in C^1(D_h)$ be a steady Euler solution with trivial homology and Lipschitz vorticity profile with $F'>-\lambda_1$. Then either $h'=0$ or $u$ possesses a contractable streamline, i.e. it has an island.
\end{manualtheorem}
\begin{rem}
Since they are stable with trivial homology, the velocities discussed  in Theorem \ref{thm} will always have stagnation points, or even lines, with Couette flow being a prime example.
The conclusions of the theorem apply more generally to any reflection-symmetric
solution, regardless of the (Lipschitz) profile $F$ as the assumption that $F' > -\lambda_1$ is used only to establish
 reflection symmetry via Lemma \ref{symlem}.
\end{rem}

These flows possessing contractable streamlines have what are called cat's eyes, of which the Kelvin-Stuart vortex is an explicit example \cite{K,S}. These structures appear also in the plasma literature where $u$ represents the magnetic field and contractable streamlines are termed  ``magnetic islands" \cite{HK,ZHQB}. The Kelvin-Stuart vortex has been shown to be nonlinearly stable \cite{HMR,Lin} and are thus dynamically persistent. Our result shows that, in a certain sense, they are a ubiquitous feature of stable fluid equilibria.\footnote{We note that Arnol'd's condition \eqref{arnoldstab} is sufficient for stability but not necessary, at least on the \textit{straight channel} $D_0$. For instance, Poiseuille flow $u=(y^2-\frac{1}{3},0)$ is nonlinearly stable, has trivial homology, but lacks reflection symmetry. Here, $\psi= G(\omega)$ with $G(\omega)= \frac{\omega}{24} (4-\omega^2)$. Thus $G '(\omega)= \frac{1}{8}(\frac{4}{3}-\omega^2)$ which vanishes $\omega|_{y= \pm \frac{1}{\sqrt{3}}} = \frac{2}{\sqrt{3}}$, e.g. along the stagnant streamlines $\{\psi = \pm -\frac{2}{9\sqrt{3}} \}$, violating both of Arnol'd's conditions.  As such, $F:= G^{-1}$ is only H\"{o}lder continuous near the critical levels and Lemma \ref{symlem} does not apply.  On the other hand, by adding a sufficiently large mean flow, $\mathsf{U} e_1$ with $\mathsf{U}>\frac{1}{3}$, one destroys the stagnation set and $u=(y^2-\frac{1}{3}+\mathsf{U} ,0)$ becomes stable in the sense of Arnol'd \eqref{arnoldstab}.  However, this changes the homology of the flow $\mathbf{P}_{\mathcal{H}_1} u= \mathsf{U}e_1$ and, again, Lemma \ref{symlem} does not apply.   In a sense, the reason why stable flows \textit{not} satisfying  Arnol'd's condition can exist at all on the channel  is that the harmonic vector fields $\propto e_1$ happen to be simultaneously Killing vector fields for the Euclidean metric on $D_0$ and tangent to the boundary.  This leads to a ``Galilean symmetry group" mapping all solutions with a given mean flow to solutions with any other and hence stability is transferred despite the breaking of  condition \eqref{arnoldstab}, see \cite[Section 2.2.]{DE22}. }

\begin{rem}[Islands are destroyed by current]
The assumption in Theorem \ref{thm} on trivial homology can, in general, not be dropped. Indeed, there exist smooth Arnol'd stable steady states in any $D_h$ with $h'\neq 0$ sufficiently small having non-trivial homology such that all  are non-contractable loops, i.e. there are no islands.  This follows directly from the results of \cite{CDG21}. Specifically,  consider the velocity $u=\nabla^\perp \psi$ with  $\psi(x,y) =   -\tfrac{1}{2}y^2+1.01y$ on the channel $D_0=\mathbb{T}\times [-1,1]$, which has the property that $|\nabla \psi|>0.01$ on $D_0$.  Then by  \cite[Theorem 1.2]{CDG21}, for small but arbitrary deformations $h$ of the boundary there exists a diffeomorphism $\varphi$ taking the original domain to the perturbed domain such that $\psi\circ \varphi$ is the streamfunction of a stable stationary solution on the new domain. The resulting solution has the same topology as $\psi$.
\end{rem}

    \begin{figure}[h!]\label{bstruct2}
          \includegraphics[width=.45 \linewidth]{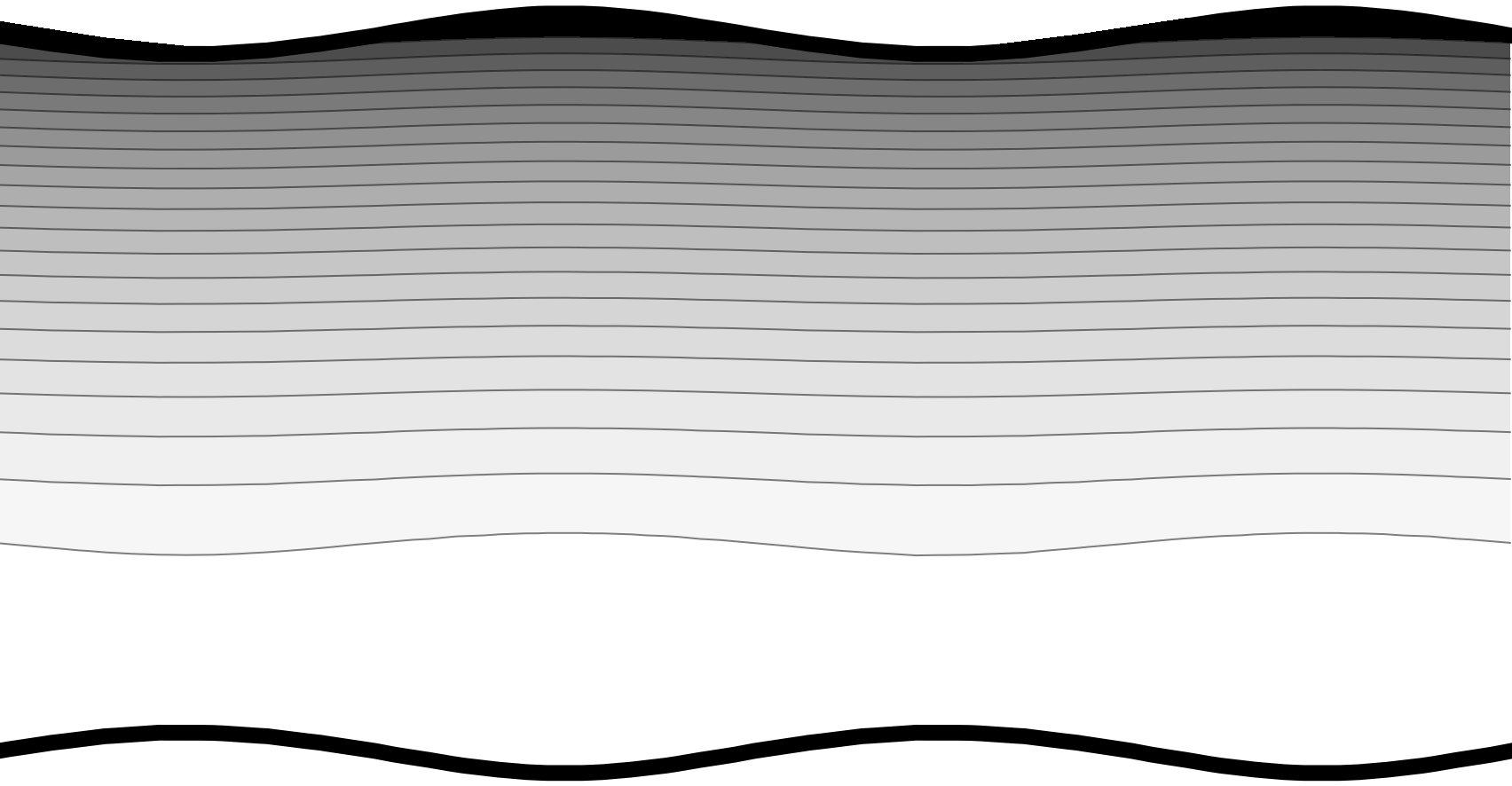}
            \caption{Steady states with no islands with non-trivial homology.}
    \end{figure}

We now prove Theorem \ref{thm}. We begin by establishing symmetry under the stated hypotheses.
\begin{lemma}\label{symlem}
Suppose that $u=\nabla^\perp \psi$ has trivial homology and
 \begin{align}
  \Delta \psi &= F(\psi)  \qquad \text{in } \quad D_h,\\
	 \psi &= {\rm const}  \qquad \text{on } \quad \partial D_h,
 \end{align}
 for  $F' > -\lambda_1$. Then $u$ has reflection symmetry, i.e. it satisfies \eqref{sym}.
\end{lemma}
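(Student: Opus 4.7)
My strategy is the standard uniqueness argument for the semilinear elliptic problem $\Delta\psi=F(\psi)$ with Dirichlet data, exploiting the reflection symmetry of the domain. Since $D_h$ is invariant under $R:(x,y)\mapsto(x,-y)$, the reflected function $\psi^*(x,y):=\psi(x,-y)$ solves the same boundary value problem, with the same boundary constant. A short computation shows that $\psi(x,y)=\psi(x,-y)$ is equivalent to the pointwise symmetry \eqref{sym} of $u=\nabla^\perp\psi$, so it suffices to prove $\psi\equiv\psi^*$.

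The first step is to use the trivial homology hypothesis to normalize the boundary data. For the annular domain $D_h$ the space of harmonic vector fields tangent to $\partial D_h$ is one-dimensional, spanned by $\nabla^\perp\phi$, where $\phi$ is harmonic in $D_h$ with $\phi=1$ on the upper boundary and $\phi=0$ on the lower. Using $\nabla^\perp\psi\cdot\nabla^\perp\phi=\nabla\psi\cdot\nabla\phi$ and integrating by parts gives
\[
\langle u,\nabla^\perp\phi\rangle_{L^2}=\int_{D_h}\nabla\psi\cdot\nabla\phi\,dA=\bigl(\psi|_{\mathrm{top}}-\psi|_{\mathrm{bot}}\bigr)\int_{\mathrm{top}}\partial_n\phi\,ds,
\]
and the last factor, being the flux of a nontrivial harmonic field, is nonzero. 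Hence $\mathbf{P}_{\mathcal{H}_1}u=0$ forces $\psi|_{\mathrm{top}}=\psi|_{\mathrm{bot}}$; subtracting this common constant, I may assume $\psi=\psi^*=0$ on $\partial D_h$.

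Next, setting $w:=\psi-\psi^*$ and using the Lipschitz structure of $F$, I write
\[
F(\psi)-F(\psi^*)=M(x,y)\,w,\qquad M(x,y):=\int_0^1 F'\bigl(t\psi+(1-t)\psi^*\bigr)\,dt,
\]
so that $w$ satisfies $-\Delta w+Mw=0$ in $D_h$ with $w=0$ on $\partial D_h$. Multiplying by $w$, integrating by parts, and applying the Dirichlet Poincar\'e inequality yields
\[
\lambda_1\int_{D_h}w^2\le\int_{D_h}|\nabla w|^2=-\int_{D_h}M\,w^2\le\mathrm{ess\,sup}(-M)\int_{D_h}w^2,
\]
which forces $w\equiv 0$ as soon as $\mathrm{ess\,sup}(-M)<\lambda_1$. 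This last strict bound is inherited from the Arnold hypothesis $F'>-\lambda_1$.

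The most delicate point I expect is exactly this final interpretation of the strict inequality $F'>-\lambda_1$: since $F$ is only Lipschitz, $F'$ exists merely a.e., and one must argue that the Arnold condition passes to the averaged coefficient $M$ with a genuine spectral gap. Because $\psi$ and $\psi^*$ are continuous on the compact set $\overline{D_h}$, their range lies in a compact interval, and any reasonable reading of $F'>-\lambda_1$ (e.g.\ as a uniform lower bound for Lipschitz difference quotients of $F$, as in \cite{AK}) yields $-M\le\lambda_1-\delta$ for some $\delta>0$ and hence $w\equiv 0$. Equivalently, one can rephrase the conclusion as the positivity of the first Dirichlet eigenvalue of $-\Delta+M$ on $D_h$, which is routine given the gap.
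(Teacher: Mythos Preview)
Your proof is correct and follows essentially the same route as the paper: use trivial homology to equalize the boundary values of $\psi$, reflect to get $\psi^*$, and apply the energy/Poincar\'e argument to the difference $w=\psi-\psi^*$ satisfying $\Delta w = M w$ with $M>-\lambda_1$. The only cosmetic difference is that you express the difference quotient via the integral form $M=\int_0^1 F'(t\psi+(1-t)\psi^*)\,dt$, while the paper invokes the mean value theorem to write $G=F'(\xi)$; your formulation is arguably the cleaner one for merely Lipschitz $F$, but the substance is identical.
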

\begin{proof}
The space of harmonic vector fields tangent to the boundary $\mathcal{H}_1(D_h)$ is
\be
\mathcal{H}_1(D_h)= \{ \nabla^\perp q \ : \ f\in C^\infty(\overline{D}_h), \ \Delta q = 0 \ \text{on} \ D_h, \ q=c_j \ \text{on} \ \Gamma_j, \ j=1,2\}
\ee
where $c_j$ are arbitrary constants and $ \Gamma_j$ are the two connected components of $\partial D_h$. This space is  one-dimensional as  the only free parameter is the difference in boundary values $c_2-c_2$, since the streamfunction $q$ is defined up to an additive constant. The $L^2$ orthogonal projection $\mathbf{P}_{\mathcal{H}_1}$ onto $\mathcal{H}_1(D_h)$ is given by
\be\label{homcalc}
(\mathbf{P}_{\mathcal{H}_1} u ,q)_{L^2}= \int_{D_h} u \cdot \nabla^\perp q \rmd x  =  \psi|_{\Gamma_2}\int_{\Gamma_2} \partial_n q  -\psi|_{\Gamma_1} \int_{\Gamma_1} \partial_n q = \big(\psi|_{\Gamma_2}-\psi|_{\Gamma_1}\big)\int_{\Gamma_1} \partial_n q
\ee
where $q$ generates $\mathcal{H}_1(D_h)$ and has unit $L^2$ norm, and where we used that $\int_{\Gamma_2} \partial_n q- \int_{\Gamma_1} \partial_n q = \int_{D_h} \Delta q = 0$. As a result, $\mathbf{P}_{\mathcal{H}_1} u=0$ implies that $\psi$ takes the same value on both connected components of $\partial D_h$.

	We now show that any such streamfunction has an even symmetry about the line $\{y=0\}$.  To this end, let $\psi_{\mathsf{R}}(x,y) := \psi(x,-y)$.
	The function $\phi = \psi -\psi_{\mathsf{R}}$ satisfies
	\begin{equation}
	 \Delta \phi = G(\psi)\phi,
	 \qquad G(\psi) = \begin{cases} \frac{F(\psi) - F(\psi_{\mathsf{R}})}{\psi-\psi_{\mathsf{R}}}, \qquad &\psi\not=\psi_{\mathsf{R}},\\
	 F'(\psi), \qquad &\psi =\psi_{\mathsf{R}} \end{cases}.
	 \label{}
	\end{equation}
	Since $\phi = 0$ on $\pa D_h$, integrating by parts yields
$
	 \int_{D_h} |\nabla \phi|^2 = -\int_{D_h} G(\psi) \phi^2.
$
	On the other hand, by Poincar\'{e}'s inequality,  $
		\lambda_1 \int_{D_h} \phi^2 \leq \int_{D_h} |\nabla \phi|^2$
	and so combining the above we find
	\begin{equation}
	 \int_{D_h} \left(\lambda_1 + G(\psi)\right)\phi^2 \leq 0.
	 \label{pineq}
	\end{equation}
	By the mean value theorem, we have $G(\psi) = F'(\xi)$ for some $\xi = \xi(x,y)$
	lying between $\psi(x,y)$ and $\psi_{\mathsf{R}}(x,y)$. Since $F'>-\lambda_1$,
		the inequality \eqref{pineq} forces $\phi = 0$.
\end{proof}
The reflection symmetry, together with the assumed $C^1$ smoothness,  forces $u_1|_{y=0}=0$. We now show
\begin{prop}
Under the conditions of Theorem \ref{thm}, $u_2|_{y=0}\neq 0$ on $\{y=0\}$ unless $h'=0$ or $u=0$.
\end{prop}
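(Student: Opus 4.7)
The plan is to argue by contradiction: assume $u_2 \equiv 0$ on $\{y=0\}$ and deduce that $\psi$ depends only on $y$ in the upper half channel $D_h^+ := D_h \cap \{y \geq 0\}$. Once $\psi$ is a shear on $D_h^+$, the boundary condition $\psi = 0$ on $\{y = 1 + h(x)\}$ will force either $h$ to be constant or $\psi$ to vanish identically, giving exactly the claimed dichotomy. By Lemma \ref{symlem} (after normalizing so that $\psi = 0$ on $\partial D_h$), the reflection symmetry yields $\psi(x,y) = \psi(x,-y)$, and hence $\partial_y\psi|_{y=0} \equiv 0$. The working hypothesis $u_2 = \partial_x\psi \equiv 0$ on the centerline gives $\psi|_{y=0} \equiv c$ for some constant $c$. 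Thus $\psi$ solves the overdetermined problem $\Delta \psi = F(\psi)$ in $D_h^+$ with Cauchy data $\psi = c$, $\partial_y\psi = 0$ on $\{y=0\}$.

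To exploit this overdetermination, I would compare $\psi$ with the one-dimensional ``shear candidate" $\psi_0 : \mathbb{R} \to \mathbb{R}$ defined as the unique global solution of the ODE $\psi_0'' = F(\psi_0)$ with $\psi_0(0) = c$ and $\psi_0'(0) = 0$; existence on all of $\mathbb{R}$ follows from the Lipschitz hypothesis on $F$. The difference $\phi(x,y) := \psi(x,y) - \psi_0(y)$ then satisfies
\[
\Delta\phi = V(x,y)\,\phi, \qquad V := \frac{F(\psi)-F(\psi_0)}{\psi-\psi_0},
\]
on $D_h^+$, with $\|V\|_{L^\infty}\leq \mathrm{Lip}(F)$, together with vanishing Cauchy data $\phi|_{y=0} = \partial_y\phi|_{y=0} = 0$. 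Unique continuation for second-order linear elliptic equations with bounded potential, applied across the non-characteristic curve $\{y=0\}$, then forces $\phi \equiv 0$ throughout $D_h^+$, so $\psi(x,y) = \psi_0(y)$ depends only on $y$.

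It remains to deploy the boundary condition, which now reads $\psi_0(1+h(x)) = 0$ for every $x \in \mathbb{T}$. If $h$ is non-constant, the image $1 + h(\mathbb{T})$ is a non-degenerate closed interval $I$ on which $\psi_0$ vanishes; hence $\psi_0' \equiv 0$ on the interior of $I$. Evaluating the ODE at any such interior point yields $F(0) = 0$, after which Picard uniqueness applied to the zero Cauchy data gives $\psi_0 \equiv 0$ on $\mathbb{R}$. In particular $c = 0$, $\psi \equiv 0$ on $D_h^+$, and by the reflection symmetry $u \equiv 0$ on $D_h$. The only alternative is that $h$ is constant, i.e.\ $h' \equiv 0$, completing the dichotomy.

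The principal technical point is the unique continuation step: since $F$ is only Lipschitz, the coefficient $V$ lies in $L^\infty$ but not necessarily in a smoother class, so Holmgren's theorem does not directly apply. However, unique continuation from vanishing Cauchy data on a smooth non-characteristic curve for second-order elliptic equations with constant-coefficient principal part and bounded zeroth-order term is classical in two dimensions (via Aronszajn/Carleman estimates), and the vanishing of $\phi$ and $\partial_y\phi$ on $\{y=0\}$ is exactly the required input.
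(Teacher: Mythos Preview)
Your argument is correct and follows the same overall strategy as the paper: both proofs observe that the hypotheses give vanishing Cauchy data $\psi=c$, $\partial_y\psi=0$ on the centerline, and then invoke unique continuation for $\Delta + V$ with $V\in L^\infty$ to force $\psi$ to be a shear on $D_h^+$, after which the Dirichlet condition on the curved top yields the dichotomy. The executions differ, however. The paper applies its in-house Carleman estimate (Lemma~\ref{generalcarlemanlem}) to $\partial_x\psi$ directly, which requires first upgrading to $\psi\in H^3$ via elliptic regularity, and then runs a \emph{second} Carleman pass on $\partial_y\psi$ on the rectangle $\mathsf{R}$ to kill the remaining component. You instead introduce the one-dimensional comparison solution $\psi_0$ and apply unique continuation to $\phi=\psi-\psi_0$; this avoids the differentiation (so only $\psi\in H^2$ is needed) and replaces the paper's second Carleman step with elementary ODE Picard uniqueness. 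Your route is therefore somewhat lighter, at the cost of outsourcing the Carleman/Aronszajn input rather than proving it; the paper's approach is self-contained but heavier. Both reach the same conclusion with no gaps.
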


\begin{proof}
We shall show that, under our hypotheses, demanding $u_2|_{y=0}= 0$ forces $\psi$ to solve an overdetermined elliptic problem and this results in $\psi$ being constant unless $h'=0$.  This is similar to the rigidity encountered in stationary free boundary fluid problems \cite{HN,CDG22}. In this case, the rigidity will follow from a unique continuation argument via the following Carleman estimate.
\begin{lemma}
	\label{generalcarlemanlem}
 Let  $D$ be a bounded domain and $w \in H^2({D})$ satisfy $w, \nabla w = 0$ on $\pa {D}$.
 Let $\varphi_0 \in C^4({D})$ and suppose that
 $\varphi_0 \geq 0$ and
 $|\pa_y \varphi_0| > 0$ in ${D}$.
 Then, for $C, \lp_0, \lambda_0 > 0$ depending only on ${D}$
  and $\|\varphi_0\|_{C^4({D})}$, $(\inf_D |\nabla \varphi_0|)^{-1}$,
 if $|\lp|> \lp_0$ and $\lambda > \lambda_0$, we have
 \begin{equation}
  \int_{{D}} |e^{\lp \varphi} \Delta e^{-\lp \varphi} w|^2\,\rmd x
	\geq C \lp^2 \int_{{D}} |w|^2\, \rmd x ,
	\qquad \text{ where } \varphi := e^{\lambda \varphi_0}.
  \label{generalcarleman}
 \end{equation}
 \end{lemma}
 \begin{proof}
 The proof is motivated by arguments that can be found, e.g. in \cite{Horm}.
 We first claim that, regardless of the specific form of $\varphi$, if
 $|\pa_y \varphi| >0$ for any $\lp$ we have the following inequality,
 \begin{equation}
  \int_{{D}} |e^{\lp \varphi} \Delta e^{-\lp \varphi} w|^2\,\rmd x
  \geq
	  \int_{{D}} |Aw |^2
	\, \rmd x
  + \int_{{D}} 	K[w,\varphi]\, \rmd x,
  \label{carlemanvarphi}
 \end{equation}
 where $A := \Delta + \lp^2 |\nabla \varphi|^2 $ and
 where,
  writing $\hess \varphi = \nabla \otimes \nabla \varphi$,
 \begin{equation}
 	K[w,\varphi]:=
  4\lp (\hess\varphi)(\nabla w, \nabla w) +
	 4 \lp^3 (\hess\varphi)(\nabla \varphi,
	\nabla \varphi) |w|^2 - \lp \Delta^2 \varphi | w|^2.
  \label{Kformula}
 \end{equation}
\textbf{Proof of Inequality \eqref{carlemanvarphi}.}
Write  $  e^{\lp \varphi} \Delta e^{-\lp \varphi} w = Aw + Bw$
 where $B = -\lp \left(2\nabla \varphi \cdot \nabla + \Delta \varphi\right)$.
Then
 \begin{equation}
  \int_{{D}} |Aw + Bw|^2\, \rmd x
	=\int_{{D}}\left( |Aw|^2 + |Bw|^2 + 2 Aw Bw\right)\, \rmd x
	\geq \int_{D} \left(|Aw|^2 + 2 Aw Bw\right)\, \rmd x.
  \label{}
 \end{equation}
 To deal with the last term, we claim that
 \begin{equation}
  2Aw Bw = -2\lp \left( \Delta w+ \lp^2 |\nabla \varphi|^2w\right)
	\left( 2\nabla\varphi\cdot \nabla w + (\Delta \varphi)w\right)
	=   \div T + K,
  \label{divident}
 \end{equation}
 where $K$ is as in \eqref{Kformula} and where
 \begin{align}\label{Tformula}
  T &
	=-2\lp \left(\nabla w \nabla \varphi \cdot \nabla w -\tfrac{1}{2} \nabla \varphi
	|\nabla w|^2 + \nabla w w \Delta \varphi - \tfrac{1}{2} |w|^2 \nabla
	\Delta \varphi + \lp^2|\nabla \varphi|^2 \nabla \varphi |w|^2 \right)
\end{align}
has the property that $\int_{\pa {D}} T\cdot \widehat{n} = 0$ under
our assumptions on $w$. The bound \eqref{carlemanvarphi} then follows immediately
from the identity \eqref{divident} and \eqref{Tformula}.
To prove this identity, we first write
 \begin{align}\nonumber
  2\Delta w \nabla \varphi \cdot \nabla w
	&= \div\left( 2\nabla w \nabla\varphi \cdot \nabla w -  \nabla \varphi |\nabla w|^2\right)
	+  \Delta \varphi |\nabla w|^2
	- 2(\hess\varphi)(\nabla w, \nabla w).
  \label{}
 \end{align}
 We also have
 \begin{multline}
  \Delta \varphi \Delta w  w  = \Delta \varphi\, \div (\nabla w w)
	- \Delta \varphi |\nabla w|^2
	= \div \left( \Delta \varphi \nabla w w  - \tfrac{1}{2} |w|^2 \nabla \Delta
	\varphi\right) + \tfrac{1}{2}\Delta^2 \varphi |w|^2 - \Delta \varphi|\nabla w|^2
\nonumber
 \end{multline}
 as well as
\begin{equation}
  \lp^2|\nabla \varphi|^2 w \left(2 \nabla \varphi\cdot \nabla w + \Delta \varphi w\right)
 = \div \left( \lp^2 |\nabla \varphi|^2 \nabla \varphi |w|^2\right)
 - 2\lp^2 (\hess \varphi)(\nabla \varphi, \nabla \varphi) |w|^2,
 \label{}
\end{equation}
and adding up the previous three lines and multiplying
by $-2\lp$ we arrive at \eqref{divident}.
\vspace{2mm}

\noindent \textbf{Proof of Inequality \eqref{generalcarleman}.} With $\varphi = e^{\lambda \varphi_0}$, we compute
\begin{align}
	\label{hessformula}
 \nabla \varphi &= \lambda \nabla \varphi_0 \varphi,\qquad \hess \varphi = \lambda\left( \hess \varphi_0  + \lambda \nabla\varphi_0\otimes\nabla \varphi_0\right)  \varphi,
 \qquad
 \Delta \varphi = \lambda \left( \Delta \varphi_0 + \lambda |\nabla \varphi_0|^2\right) \varphi
 \end{align}
 and so $K$ takes the form
 \begin{multline}
  K =4\lp \lambda^2 |\nabla \varphi_0\cdot \nabla w|^2 \varphi^2
	+4 \lp^3 \lambda^4 |\nabla \varphi_0|^4 |w|^2 \varphi^3\\
	+4 \lp \lambda (\hess \varphi_0)(\nabla w, \nabla w)\varphi
	+ 4 \lp^3 \lambda^3 (\hess \varphi_0)(\nabla \varphi_0, \nabla \varphi_0)
	|w|^2 \varphi^3- \lp \Delta^2 \varphi |w|^2.
  \label{}
 \end{multline}
 Bounding $|\Delta^2 \varphi| \leq C \lambda^4 \varphi$ where $C$
 depends on $\|\varphi_0\|_{C^4}$ and noting the positivity
 of the terms on the first line,
since $|\nabla \varphi_0|$ is bounded below in $D$, there are $\lambda_0 > 0$
and $\lp_0 > 0$
depending on $ (\inf |\nabla \varphi_0|)^{-1} > 0$ and
$\|\varphi_0\|_{C^2(D)}$ so that if $\lambda > \lambda_0$
and $\lp > \lp_0$ we have
\begin{equation}
 4 \lp^3 \lambda^4 |\nabla \varphi_0|^4 |w|^2\varphi^3 + 4 \lp^3 \lambda^3 (\hess \varphi_0)(\nabla \varphi_0, \nabla \varphi_0)
 |w|^2\varphi^3 - \lp \Delta^2 \varphi |w|^2
 \geq C \lp^3 \lambda^4 |w|^2\varphi^3,
 \label{}
\end{equation}
and for such $\lambda$ we find
\be
 	K[w,\varphi]\geq
	C \left( \lp \lambda^2 |\nabla \varphi_0 \cdot \nabla w|^2 \varphi^2 + \lp^3 \lambda^4 |w|^2 \varphi^3\right)
	- C' \lp\lambda |\nabla w|^2 \varphi,
	\label{almost0}
\ee
and returning to \eqref{carlemanvarphi}, we have the inequality
\begin{equation}
 \int_{{D}} |e^{\lp \varphi} \Delta e^{-\lp \varphi} w|^2\,\rmd x
 \geq
	C \int_{{D}} \left(|Aw |^2 + \lp \lambda^2 |\nabla \varphi_0\cdot \nabla w|^2
	\varphi^2 + \lp^3 \lambda^4 |w|^2 \varphi^3\right)\, \rmd x
	- C'\lp\lambda  \int_{D}|\nabla w|^2 \varphi\, \rmd x.
 \label{almost}
\end{equation}
To control the last term here, we start by getting a lower bound
for $\|A w\|_{L^2}$.
For this, we write
\begin{align}\nonumber
 \int_{D} |\nabla w|^2 \,\varphi \rmd x
 &= -\int_{D}  Aww\, \varphi \rmd x
 + \int_{D}\Big(\lp^2|\nabla \varphi|^2 |w|^2\, \varphi
 - \nabla \varphi \cdot \nabla w w\Big) \rmd x\\ \nonumber
 &= -\int_{D}   Aww\, \varphi\rmd x +\int_{D}
 \lp^2\lambda^2 |\nabla \varphi_0|^2
  |w|^2 \varphi^3\rmd x
- \int_{D} \lambda  (\nabla \varphi_0 \cdot \nabla w) w\, \varphi \rmd x\\ \nonumber
 &\leq \frac{1}{2 \delta}
 \| A w\|_{L^2}^2 +C
 \Big( \delta + \lp^2\lambda^2 \Big)
 \| w \varphi^{3/2}\|_{L^2}^2
 + \lambda \|\nabla \varphi_0\cdot \nabla w  \|_{L^2}\| w \varphi\|_{L^2}
 \label{}
\end{align}
for any $\delta > 0$,
where $C=C(\|\varphi_0\|_{C^1(D)})$, $L^2 = L^2(D)$. Here, we used
$\varphi,\lp, \lambda \geq 1$. In particular,
\begin{align}\nonumber
 \frac{1}{2} \|A w\|_{L^2}^2
 &\geq \delta\| (\nabla w) \varphi^{1/2} \|_{L^2}^2-
  C
	\Big( \delta^2 + \delta\lp^2\lambda^2 \Big)
	 \| w \varphi^{3/2}\|_{L^2}^2
 - C\lambda \delta\|\nabla \varphi_0\cdot \nabla w\|_{L^2}\| w \varphi\|_{L^2}.
 \label{}
\end{align}
Returning to \eqref{almost}, we have shown that for $\lambda, \lp$ sufficiently
large and any $\delta > 0$,
\begin{align}\nonumber
   \|e^{\lp \varphi} \Delta e^{-\lp \varphi} w\|_{L^2}^2
	&\geq
	C\delta\|(\nabla w)\varphi^{1/2}\|_{L^2}^2+ C \lp^3 \lambda^4 \|w \varphi^{3/2}\|_{L^2}^2
	+ C \lp^2 \lambda^2 \|(\nabla \varphi_0\cdot \nabla w) \varphi\|_{L^2}^2
	\\\nonumber
	&\quad-
	C \lp\lambda \| (\nabla w)\varphi^{1/2}\|_{L^2}^2- C
	\Big( \delta^2 + \delta\lp^2\lambda^2 \Big)
	\| w \varphi^{3/2}\|_{L^2(D_h)}^2
	-
	C\lambda \delta\|\varphi_0\cdot \nabla w \|_{L^2}\| w \varphi\|_{L^2},
 \label{}
\end{align}
We now want to choose $\delta > 0$ so that we can absorb the terms on the
second line into the terms on the first line. For this we take
$\delta = \delta' \lp \lambda^2$ for small $\delta' > 0$, and the above becomes
\begin{align}\nonumber
 \|e^{\lp \varphi} \Delta e^{-\lp \varphi} w\|_{L^2}^2
&\geq
 C\lp\lambda^2\left( \delta'  \lambda - C' \right)\|(\nabla w)\varphi^{1/2}\|_{L^2}^2+C \lp^2 \lambda^4 \left( (1-C'\delta')\lp - C'(\delta')^2\right)\| w \varphi^{3/2}\|_{L^2(D_h)}^2\\
 &\quad
 + C \lp^2 \lambda^2 \|(\nabla \varphi_0\cdot \nabla w) \varphi\|_{L^2}^2
 -
 C\delta'\lp \lambda^3 \|\nabla\varphi_0\cdot \nabla w \|_{L^2}\| w \varphi\|_{L^2}
\end{align}
where all the above constants depend only on
 $\|\varphi_0\|_{C^4(D)}$. Taking $\delta'$ small enough that the coefficient
of the second term is bounded below by $C\lp^3 \lambda^3/2$ whenever
$\lp \geq 1$, and then taking $\lambda$ large enough that the
coefficient of the first term is bounded below by $C\lp \lambda^3/2$,
and finally taking $\lp$ large enough that for this choice
of $\lambda, \delta'$, we can absorb it into the first
term on the last line and the second term on the first line, we have
arrived at the estimate
\begin{equation}
 \|e^{\lp \varphi} \Delta e^{-\lp \varphi} w\|_{L^2}^2
\geq
C \lp \lambda^3 \| (\nabla w)\varphi^{1/2}\|_{L^2}^2
+ C \lp^3 \lambda^4\| w \varphi^{3/2}\|_{L^2}^2
+ C \lp^2 \lambda^2 \|(\nabla \varphi_0\cdot \nabla w)\varphi\|_{L^2}^2,
 \label{}
\end{equation}
for large enough $\lambda, \lp$,
which is more than sufficient for
the estimate \eqref{generalcarleman} since $\varphi \geq 1$.
 \end{proof}
 We use this to prove
\begin{lemma}
	Let $h \in C^2(\mathbb{R})$ and
	set $D_h^{\mathsf{up}} = \{(x, y) : 0 \leq y\leq 1+ h(x), x \in \mathbb{T}\}$.
	Let $F$ be a Lipschitz function and suppose that $\psi \in H^2(\Omega)$ satisfies
 \begin{alignat}{2}  \label{elipeqn}
  \Delta \psi &= F(\psi), &&\qquad \text{ in } D_h^{\mathsf{up}},\\
	\pa_x\psi &= \pa_y \psi= 0&&\qquad   \text{ on } \{y=0\}.
\end{alignat}
Then $\pa_x \psi = 0$.  Moreover, if $\psi$ is constant at $\{y= 1+ h(x)\}$ then either $h' =0$ or $\pa_y \psi = 0$ as well.
\end{lemma}
\begin{proof}
	We first show that $\pa_x\psi = 0$.
Since $F$ is a Lipschitz function, it is differentiable almost everywhere
and so by elliptic regularity, $\psi \in H^3(D_h^{\mathsf{up}})$. Now we note that $\pa_x \psi$
	satisfies
\begin{alignat}{2}
 \Delta \pa_x \psi&= F'(\psi)\pa_x\psi, &&\qquad \text{ in } D_h^{\mathsf{up}},\\
 \pa_x\psi &= \pa_y \pa_x\psi = 0&&\qquad   \text{ on } \{y=0\}.
 \label{}
\end{alignat}
Here, the fact that $\pa_y \pa_x\psi|_{y = 0}$ is well-defined follows
from the trace theorem and the fact that  $\psi \in H^3$.
Now define a smooth increasing cutoff function $\chi_0(z)$ so that $\chi_0(z) = 1$
when $z \geq 1$ and $\chi_0(z) = 0$ when $z \leq 1/2$.
Let $\varphi_0(x,y) = 1+h(x) - y$ and set $\varphi(x, y) = e^{\lambda \varphi_0(x,y)}$
where $\lambda > \lambda_0$ with $\lambda_0$ as in Lemma \ref{generalcarlemanlem}.
Then $D_h^{\mathsf{up}} = \{(x,y) \in \mathbb{T}\times\mathbb{R}: \varphi(x, y) \geq 1, y \geq 0\}$ and $\varphi > 1$ in the interior.
If we define
$\chi_c(x, y) = \chi_0((\varphi(x,y)-1)/c)$
and $\Omega_c = \chi_c^{-1}(1)$, then $1 + c \leq \varphi(x,y)$ on $\Omega_c$.
We also have that $\varphi(x, y) \leq 1+c$ on the support of $\nabla \chi_c$.
Moreover for $c > 0$, $\chi_c$ is zero near the boundary $\{y =1+ h(x)\}$.

Under our hypotheses on $h$, the assumptions of Lemma \ref{generalcarlemanlem}
hold, and so taking $\lp$ sufficiently large and
applying the Carleman estimate \eqref{generalcarleman}
to $w = e^{\lp \varphi} \chi_c \pa_x\psi$ with $c > 0$, we find
\begin{align}\nonumber
  \|e^{\lp \varphi} \chi_c \pa_x\psi\|_{L^2(D_h^{\mathsf{up}})}
 &\leq \frac{C}{\lp} \|e^{\lp \varphi} \Delta (\chi_c \pa_x\psi)\|_{L^2(D_h^{\mathsf{up}})}\\ \nonumber
 &\leq \frac{C}{\lp} \| e^{\lp \varphi} [\Delta, \chi_c] \pa_x\psi\|_{L^2(D_h^{\mathsf{up}})}
 + \frac{C}{\lp} \| e^{\lp \varphi} F'(\psi) \pa_x\psi\|_{L^2(D_h^{\mathsf{up}})}\\
 &\leq \frac{C}{\lp} e^{\lp (1+c)} \| \pa_x\psi\|_{H^1(D_h^{\mathsf{up}})}
 + \frac{C}{\lp} \|F'\|_{L^\infty(\textrm{rang} (\psi))}\| e^{\lp \varphi}\chi_c  \pa_x\psi\|_{L^2(D_h^{\mathsf{up}})},
 \label{}
\end{align}
where $C$ depends continuously on $1/c$.
Here we used the equation for $\pa_x\psi$ and that $\varphi(x,y)\leq c+1$
on the support of the commutator $[\Delta, \chi_c]$. We also used that $c > 0$ to justify
the vanishing of $w$ at the top boundary and
that $\pa_x\psi, \nabla \pa_x\psi = 0$ when $y = 0$ to justify the vanishing at the bottom
boundary. If we now take $\lp$ so large that $\frac{C}{\lp} \|F'\|_{L^\infty(\textrm{rang} \psi)}
\leq \frac12$, we can
absorb the second term on the right-hand side into the left, giving
\begin{equation}
	e^{\lp (1+c)} \| \pa_x\psi\|_{L^2(\Omega_c)}
  \leq\|e^{\lp \varphi} \chi_c \pa_x\psi\|_{L^2(D_h^{\mathsf{up}})}
 \leq \frac{C}{\lp} e^{\lp (1+c)} \| \pa_x\psi\|_{H^1(D_h^{\mathsf{up}})}, \qquad c > 0.
 \label{}
\end{equation}
Dividing both sides by $e^{\lp(1+c)}$ and taking $\lp \to \infty$ we find that
for each $c > 0$, $\pa_x\psi $ vanishes in the region $\Omega_c$.
Since $\Omega_c \to D_h^{\mathsf{up}}$ as $c \to 0$, taking
$c \to 0$ we find that $\pa_x\psi$ vanishes everywhere in $D_h^{\mathsf{up}}$.

This has the following consequence: $\psi$ is a function of one variable, $y$ and moreover
is identically constant on the set $\mathsf{S}:=\{(x,y) \ : \ x\in\mathbb{T}, \  1 +h_- \leq y\leq 1+h(x)\}$ where $h_-:=\min_{x\in \mathbb{T}}h(x)$.  Now consider the rectangular domain $\mathsf{R}:=\{(x,y) \ : \ x\in\mathbb{T}, \  0 \leq y\leq 1 +h_-\}$.  On this domain, $\pa_y\psi$ satisfies
\begin{align}
 \Delta \pa_y \psi&= F'(\psi)\pa_y\psi, \qquad   \text{ in } \mathsf{R},\\ \pa_y\psi &= \pa_y^2\psi = 0 \qquad\quad  \text{ on } \{y=1 +h_-\}.
 \label{bcR}
\end{align}
Indeed, the boundary condition \eqref{bcR} follows from the fact that $\Delta\psi= \partial_y^2 \psi = F(\psi)$, so $\partial_y^2\psi\in C(\mathbb{R})$.  Along $\{y=1+h_-\}$, the solution $\psi$ must match its constant value in $\mathsf{S}$ and thus $ \pa_y\psi = \pa_y^2\psi =0$.  Provided $h'\neq 0$ so that $\mathsf{S}\neq \emptyset$, the result that $\partial_y\psi=0$ follows by essentially the same argument as above.
\end{proof}
The Proposition follows, since if $u_2|_{\{y=0\}}=0$ then unless $h'= 0$ we must have $u=0$ everywhere.  
\end{proof}

\vspace{-2mm}
\noindent To complete the proof of Theorem \ref{thm}, we appeal to
\begin{prop}
Let $u$ be a Lipschitz divergence-free vector field on $D_h$ with reflection symmetry.  Almost every point $p$ on $\{y=0\}$ such that $|u_2(p)|\neq 0$ lies on a closed streamline.
\end{prop}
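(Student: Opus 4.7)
The plan is to lift $u$ to a single-valued stream function and read off streamlines as its level curves; then at almost every level the streamline containing $p$ is a smooth closed loop.

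First, reflection symmetry forces the horizontal flux across each vertical cross-section to vanish: the identity $u_1(x_0,-y)=-u_1(x_0,y)$ together with the symmetry of the fiber $[-1-h(x_0),1+h(x_0)]$ gives $\int u_1(x_0,y)\,\rmd y=0$. Combined with $\nabla\cdot u=0$, this yields a single-valued, Lipschitz-gradient stream function $\psi:D_h\to\mathbb{R}$ with $u=\nabla^\perp\psi$, constant on each boundary component (with values $c_j$ on $\Gamma_j$). A direct computation from the identities $u_2=\partial_x\psi$ (even in $y$) and $-u_1=\partial_y\psi$ (odd in $y$) then also shows $\psi(x,y)=\psi(x,-y)$, though this evenness is not strictly needed for what follows.

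Next, take $p=(x_0,0)$ with $u_2(p)\neq 0$ and set $\alpha:=\psi(p)$. The integral curve of $u$ through $p$ is confined to $L_\alpha:=\{\psi=\alpha\}$ by conservation of $\psi$ along the flow, and $\nabla\psi(p)\neq 0$ since $u(p)\neq 0$. If $\alpha$ is a regular value of $\psi$ in the interior with $\alpha\notin\{c_1,c_2\}$, then $L_\alpha$ is a compact $1$-dimensional submanifold of the interior of $D_h$, hence a finite disjoint union of smooth Jordan curves. Since $u=\nabla^\perp\psi$ is nowhere zero on $L_\alpha$, the streamline through $p$ coincides with the connected component of $L_\alpha$ containing $p$, which is a closed loop.

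To upgrade ``regular $\alpha$'' to ``almost every $p$'', I would invoke Sard's theorem, applicable since $\psi\in C^2$ under the $C^1$ hypothesis on $u$ in Theorem \ref{thm}, so that the set $N\subset\mathbb{R}$ of non-regular or boundary values is Lebesgue null. The restriction $\psi(\cdot,0)$ is $C^1$ with non-vanishing derivative $u_2(\cdot,0)$ on the open set $\{u_2(\cdot,0)\neq 0\}\subset\mathbb{T}$, hence is locally a $C^1$-diffeomorphism there and maps null sets to null sets; consequently the preimage $\psi(\cdot,0)^{-1}(N)$ is $1$-dimensional null. Every $p$ outside this null set falls under the previous paragraph and thus lies on a closed streamline.

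The main obstacle I anticipate is the Sard step when $u$ is only genuinely Lipschitz, so that $\psi$ is merely $C^{1,1}$. A robust fallback that sidesteps Sard exploits the symmetry $\gamma(-t)=R\gamma(t)$ of the integral curve $\gamma$ through $p$, where $R(x,y)=(x,-y)$: if $\gamma$ ever returns to $\{y=0\}$ at some $t_1>0$, then $\gamma(-t_1)=R\gamma(t_1)=\gamma(t_1)$, forcing periodicity by ODE uniqueness for Lipschitz vector fields. Failure to return on a regular level is then precluded by Poincar\'e--Bendixson on the compact set $L_\alpha$, which would otherwise force a stagnation point of $u$ into the $\omega$-limit set --- an exceptional possibility that is again confined to a null set of $p$ by the local monotonicity of $\psi(\cdot,0)$ where $u_2(\cdot,0)\neq 0$.
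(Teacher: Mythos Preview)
Your main Sard-based argument is correct once $u\in C^1$ (so $\psi\in C^2$), which is the regularity actually assumed in Theorem~\ref{thm}, but it is a genuinely different route from the paper's. The paper never invokes Sard: it argues directly from incompressibility that almost every $p$ in $\mathsf{F}=\{y=0\}\cap\{|u_2|\neq 0\}$ lies on a regular level of $\psi$. If not, one finds an interval $I\subset\mathsf{F}$ on which almost every orbit has $\omega$-limit a critical point; since $|u_2|\geq\ve_0>0$ on $I$, short-time flow of $I$ sweeps out a flowbox of positive area whose forward evolution collapses (in infinite time) into the measure-zero critical set, contradicting area preservation. After that, both proofs finish the same way: a regular level is a compact $1$-manifold on which $u\neq 0$, so the component through $p$ is a closed orbit, and the reflection symmetry $\gamma(-t)=R\gamma(t)$ (which you also note) then yields contractibility. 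Your approach is cleaner and more geometric when $\psi\in C^2$; the paper's measure-preservation argument is more robust in the bare Lipschitz setting of the Proposition, where the classical Sard theorem is unavailable.

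Your fallback, however, does not actually escape the Sard obstruction. The symmetry identity $\gamma(-t)=R\gamma(t)$ correctly reduces closedness to a first return to $\{y=0\}$, but your Poincar\'e--Bendixson step rules out non-return only by producing a stagnation point on the level set $L_{\psi(p)}$; declaring this ``exceptional'' and confined to a null set of $p$ via local monotonicity of $\psi(\cdot,0)$ is precisely the assertion that the set of critical values of $\psi$ is null---Sard again. The paper's flowbox/area-preservation argument is the ingredient that closes this gap without any regularity beyond Lipschitz.
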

\begin{proof}
Let $\mathsf{F} := \{p\in\{y=0\} \ : \ |u_2(p)|\neq 0\}$ be the (open) set of non-stagnant points along the centerline. By incompressibility, almost every point $p\in\mathsf{F}$ is a regular point of $\psi$, namely the set $\{x \ : \ \psi(x)=\psi(p)\}$ contains no critical points.  Otherwise there would exist an interval $I\subset \mathsf{F}$ such that for almost every $q\in I$, the Omega limit set of the orbit passing through $q$ is a critical point.  Since $|u_2||_{I}> \ve_0>0$ as $u$ is continuous,  flowing for short time the interval, one can form a flowbox of positive area which would then (in infinite time) compress to a set of zero area,  a contradiction. 
For any regular point $p$, it is easy to see that the level set $\{x \ : \ \psi(x)=\psi(p)\}$ is a simple closed curve. Moreover, since the streamfunction $\psi$ has even symmetry about $\{y=0\}$, any such curve must be contractible since (a) the orbit  through $p$ must return to $\{y=0\}$ by uniqueness of integral curves and (b) once it returns, by symmetry, the integral curve is contractible. This completes the proof.
\end{proof}

\begin{rem}
In fact, if $\psi$ is a Morse function, \textit{every} point $p$ on $\{y=0\}$ such that $|u_2(p)|\neq 0$ lies on a closed streamline. This follows from the Poincar\'{e}--Hopf index theorem, the fact that the Euler characteristic of the annulus is zero, and the Poincar\'{e}-Bendixson theorem together with incompressibility which ensures all critical points are either hyperbolic or elliptic.
\end{rem}

\noindent \textbf{Acknowledgements.} We thank P. Constantin for many inspiring discussions, D. Peralta-Salas for useful comments, and A. Bhattacharjee for bringing the Hahm-Kulsrud problem to our attention.
The research of TDD was partially supported by the NSF
DMS-2106233 grant and  NSF CAREER award \#2235395.

\end{document}